\title{Dependence and Isolated Extensions}
\author{Vincent Guingona \\ Department of Mathematics \\ University of Maryland }
\thanks{Special thanks to Chris Laskowski.}
\date{\today}
\newtheorem{thm}{Theorem}[section]
\newtheorem{cor}[thm]{Corollary}
\newtheorem{lem}[thm]{Lemma}
\newtheorem{prop}[thm]{Proposition}
\theoremstyle{remark}
\newtheorem{rem}[thm]{Remark}
\theoremstyle{definition}
\newtheorem{defn}[thm]{Definition}
\newcommand{\dom}{\mathrm{dom} }
\newcommand{\tp}{\mathrm{tp} }
\newcommand{\concat}{{}^{\frown} }
\newcommand{\ID}{\mathrm{ID} }
\newcommand{\Avtp}{\mathrm{Av} }
\begin{document}

\maketitle

\begin{abstract}
 In this paper, we show that if $\varphi(x; y)$ is a dependent formula, then all $\varphi$-types $p$ have an extension to a $\varphi$-isolated $\varphi$-type, $p'$.  Moreover, we can choose $p'$ to be a \textit{elementary $\varphi$-extension} of $p$ (see Definition \ref{elmexten} below) and so that $|\dom(p') - \dom(p)| \le 2 \cdot \ID(\varphi)$.  We show that this characterizes $\varphi$ being dependent.  Finally, we give some corollaries of this theorem and draw some parallels to the stable setting.
\end{abstract}

\section{Introduction}\label{Section_intro}

There is a characterization of the stability of a formula $\varphi(\overline{x}; \overline{y})$ in terms of the definability of all $\varphi$-types.  A partitioned formula $\varphi(\overline{x}; \overline{y})$ is stable if and only if all $\varphi$-types are definable by a formula over their domain  \cite{shelah}.  We create an analogous result for dependent formulas (that is, formulas without the independence property, sometimes referred to as ``NIP'' formulas).  Since dependence is a strictly weaker notion than stability, we cannot hope to have definability of $\varphi$-types over their domain for general dependent formulas, $\varphi$.  However, we change the conclusion slightly, in two separate ways, and get a characterization of dependent formulas.

First, we weaken the requirement that a $\varphi$-type $p$ be definable over $\dom(p)$.  Instead, we take a model $M$ containing $\dom(p)$, take an elementary extension $(N; B)$ of the pair structure $(M; \dom(p))$, and demand that $p$ be definable over $B$.  Second, we strengthen the method by which the $\varphi$-type $p$ is definable.  Instead of being merely definable over this expanded set $B$, we demand that there exists an extension of $p$ to a $\varphi$-type $p'$ such that $\dom(p') \subseteq B$ and $p'$ is $\varphi$-isolated.  From all of this, we construct an analogous result to the characterization of stable formulas, the Isolated Extension Theorem (Theorem \ref{elmextiso} below).  The proof of this theorem is loosely based on a paper by Shelah \cite{shelah900}.

In Section \ref{Section_defsandmainthm} we discuss definitions, state the main theorem, and list some consequences of that theorem.  The main theorem, Theorem \ref{elmextiso}, is proved in Section \ref{Section_proofofmainthm}.  Finally, in the Section \ref{Section_stablesetting}, we discuss the implications of this theorem to the stable case.  Even in the stable case, Theorem \ref{elmextiso} provides new information.

\section{Definitions and The Isolated Extension Theorem}\label{Section_defsandmainthm}

Fix a complete, first-order theory $T$ in a language $L$.  We include the case where $L$ is multi-sorted, so we need to keep track of the sorts of variables.  For convenience, if $\psi(\overline{x})$ is any formula, then let $\psi(\overline{x})^0 = \neg \psi(\overline{x})$ and let $\psi(\overline{x})^1 = \psi(\overline{x})$.

For the first three definitions, fix $\varphi(\overline{x}; \overline{y})$ a partitioned formula of $L$.  By a $\varphi$-type, we mean a consistent set of formulas $p(\overline{x}) = \{ \varphi(\overline{x}; \overline{b})^{s(\overline{b})} : \overline{b} \in B \}$ for some set $B$ of elements of the same sort as $\overline{y}$ and some $s \in {}^B 2$ (the set of functions from $B$ to $2 = \{0, 1\}$).  We say that $\dom(p) = B$ and the space of all $\varphi$-types over $B$ is denoted

\begin{equation}
 S_\varphi(B) = \{ p(x) \mathrm{\ a\ } \varphi \mathrm{-type} : \dom(p) = B \}
\end{equation}

For any model $M \models T$, for any $\overline{a}$ from $M$ and any $B$ a set of elements of the same sort as $\overline{y}$ from $M$, let $\tp_\varphi(\overline{a} / B)$ be the $\varphi$-type over $B$ given by:

\begin{equation}
 \tp_\varphi(\overline{a} / B) = \{ \varphi(\overline{x}; \overline{b})^t : \overline{b} \in B, t < 2 \mathrm{\ such\ that\ } M \models \varphi(\overline{a}; \overline{b})^t \}
\end{equation}

The above notions can be defined for sets of formulas $\Gamma(\overline{x}; \overline{y})$ (instead of a single formula) in the obvious way.  Throughout this section, when we mention a $\varphi$-type over $B$, look at $\tp_\varphi(\overline{a} / B)$, or consider the set $S_\varphi(B)$, we want $B$ to be a set of elements of the same sort as $\overline{y}$ (that is, if $\overline{y} = (y_0, ..., y_{n-1})$, then $B$ is a set of $n$-tuples $\overline{b} = (b_0, ..., b_{n-1})$ such that $b_i$ is of the same sort as $y_i$ for all $i < n$).  In Section \ref{Section_proofofmainthm} when we consider $\Delta$-types, we will alter this notation slightly for simplification.  When we consider the set of formulas $\Delta(y; z_0, ..., z_{n-1})$ where all the $z_i$'s are of the same sort and $B$ is a set of elements of that sort, we will abuse notation and say that a $\Delta$-type is \textit{over $B$} when it is actually over $B^n$ and we will write $\tp_\Delta(c / B)$ when we mean $\tp_\Delta(c / B^n)$.

\begin{defn}\label{defdependent}
 We say that a set $B$ of elements of the same sort as $\overline{y}$ is \textbf{$\varphi$-independent} if, for all $s \in {}^B 2$, the set of formulas $\{ \varphi(\overline{x}; \overline{b})^{s(\overline{b})} : \overline{b} \in B \}$ is consistent.  We say that $\varphi$ has \textbf{independence dimension $n < \omega$}, denoted $\ID(\varphi) = n$, if $n$ is maximal such that, for some (equivalently any) model $M \models T$, there exists a set $B$ of elements of the same sort as $\overline{y}$ from $M$ with $|B| = n$ such that $B$ is $\varphi$-independent.  If such an $n$ exists, then we say that $\varphi$ is \textbf{dependent}.  If no such $n$ exists, then we say that $\varphi$ is \textbf{independent}.
\end{defn}

Notice that when $B$ is finite, $B$ is $\varphi$-independent if and only if $|S_\varphi(B)| = 2^{|B|}$.

\begin{defn}\label{phiiso}
 We say that a $\varphi$-type $p(\overline{x})$ is \textbf{$\varphi$-isolated} if there exists a finite $\varphi$-subtype $p_0(\overline{x}) \subseteq p(\overline{x})$ such that $p_0(\overline{x}) \vdash p(\overline{x})$.  We say that a formula $\psi(\overline{x})$ is a \textbf{$\varphi$-formula} if it is of the form $\psi(\overline{x}) = \bigwedge_{i < n} \varphi(\overline{x}; \overline{b}_i)^{s(i)}$ for some $n < \omega$, some elements $\overline{b}_i$ of the same sort as $\overline{y}$, and some $s \in {}^n 2$.
\end{defn}

We see that a $\varphi$-type $p(\overline{x})$ is $\varphi$-isolated if and only if there exists a $\varphi$-formula, $\psi(\overline{x})$ over $\dom(p)$ such that $p(\overline{x})$ is equivalent to $\psi(\overline{x})$.  This $\varphi$-formula is simply the conjunction of the finite $\varphi$-subtype $p_0(\overline{x})$ given in Definition \ref{phiiso}.

For a model $M \models T$ and a set $B$ of elements of same sort as $\overline{y}$ from $M$, consider the language $L_B = L \cup \{ P_B \}$ an expansion of $L$ by adding a single predicate, $P_B(\overline{y})$.  Let $(M; B)$ be the obvious $L_B$-structure.  By ``$(N; B') \succeq (M; B)$'' we mean that $(N; B')$ is an elementary extension of $(M; B)$ in the language $L_B$.

\begin{defn}\label{elmexten}
 Fix $M \models T$ and a set $B$ of elements of the same sort as $\overline{y}$ from $M$.  We say that a $\varphi$-type $p'$ is an \textbf{elementary $\varphi$-extension} of the $\varphi$-type $p \in S_\varphi(B)$ if $p'$ extends $p$ and $\dom(p') \subseteq B'$ for some $(N; B') \succeq (M; B)$.
\end{defn}

Now we are ready to state the main theorem of the paper.  We will give the proof in Section \ref{Section_proofofmainthm} below.

\begin{thm}[The isolated extension theorem]\label{elmextiso}
 For any partitioned formula $\varphi(\overline{x}; \overline{y})$, the following are equivalent:

 \begin{itemize}
  \item [(i)] $\varphi$ is dependent;
  \item [(ii)] For all $\varphi$-types $p$, there exists a $\varphi$-isolated elementary $\varphi$-extension of $p$.
 \end{itemize}

 Moreover, if the above conditions hold, we can choose $p'$ a $\varphi$-isolated elementary $\varphi$-extension of $p \in S_\varphi(B)$ such that $|\dom(p') - B| \le 2 \cdot \ID(\varphi)$.
\end{thm}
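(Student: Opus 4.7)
For $(ii) \Rightarrow (i)$, I plan to argue the contrapositive via a compactness argument. Assuming $\varphi$ is independent, compactness yields a model $M \models T$ containing an infinite $\varphi$-independent set $B$. The property that $B$ is $\varphi$-independent is captured by the scheme of $L_B$-sentences
\begin{equation*}
 \forall \overline{y}_1 \ldots \overline{y}_n \left( \bigwedge_{i < n} P_B(\overline{y}_i) \wedge \bigwedge_{i < j} \overline{y}_i \ne \overline{y}_j \; \to \; \exists \overline{x} \bigwedge_{i < n} \varphi(\overline{x}; \overline{y}_i)^{s(i)} \right)
\end{equation*}
ranging over all $n < \omega$ and $s \in {}^n 2$, so in any $(N; B') \succeq (M; B)$ the set $B'$ is again $\varphi$-independent. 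Fix any $p \in S_\varphi(B)$ and suppose for contradiction that $p'$ is a $\varphi$-isolated elementary $\varphi$-extension of $p$, witnessed by some $\varphi$-formula $\psi(\overline{x}) = \bigwedge_{i < k} \varphi(\overline{x}; \overline{c}_i)^{t(i)}$ over such a $B'$. Since $B$ is infinite, pick $\overline{b} \in B \setminus \{\overline{c}_0, \ldots, \overline{c}_{k-1}\}$; because the finite subset $\{\overline{c}_0, \ldots, \overline{c}_{k-1}, \overline{b}\} \subseteq B'$ is $\varphi$-independent, both $\psi(\overline{x}) \wedge \varphi(\overline{x}; \overline{b})$ and $\psi(\overline{x}) \wedge \neg \varphi(\overline{x}; \overline{b})$ are consistent, so $\psi$ fails to imply $\varphi(\overline{x}; \overline{b})^{p(\overline{b})}$, contradicting $\psi \vdash p' \supseteq p$.

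For $(i) \Rightarrow (ii)$, set $n = \ID(\varphi)$ and fix $p \in S_\varphi(B)$. Choose a model $M \supseteq B$ and a sufficiently saturated pair-elementary extension $(M^*; B^*) \succeq (M; B)$; realize $p$ by some $\overline{a}$ in a further elementary extension, and set $q(\overline{x}) := \tp_\varphi(\overline{a} / B^*)$, a $\varphi$-type over $B^*$ extending $p$. The strategy is to locate a finite $C \subseteq B^*$ with $|C| \le 2n$ such that $q|_{B \cup C}$ is $\varphi$-isolated; then $p' := q|_{B \cup C}$ serves as the required $\varphi$-isolated elementary $\varphi$-extension of $p$.

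The main obstacle, and the step I expect to be most delicate, is establishing the bound $|C| \le 2n$. My plan is to proceed by contradiction: if no such $C$ of size at most $2n$ works, then for every candidate $C \subseteq B^*$ the associated $\varphi$-formula $\bigwedge_{\overline{c} \in C} \varphi(\overline{x}; \overline{c})^{q(\overline{c})}$ fails to decide some $\overline{b} \in B^*$, meaning both sign-extensions by $\varphi(\overline{x}; \overline{b})$ remain consistent with the formula. Iterating this witness extraction inside the saturated pair $(M^*; B^*)$, together with Ramsey to extract a $\varphi$-indiscernible sequence in $B^*$ realizing an alternating pattern, the goal is to construct a $\varphi$-independent subset of $B^*$ of size $n+1$, contradicting $\ID(\varphi) = n$. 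The factor of $2$ in the bound is expected to appear because each additional level of independence must be supported by both a ``$0$-witness'' and a ``$1$-witness'' during the iterative construction, so every new unit of independence dimension consumes two parameters, forcing an $(n+1)$-st independent element to appear once $2n$ parameters have failed to isolate. This alternation-style extraction is modeled on the shrinking-of-indiscernibles techniques in Shelah \cite{shelah900}.
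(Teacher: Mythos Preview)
Your argument for $(ii)\Rightarrow(i)$ is correct and is essentially the paper's: elementarity of the pair transfers $\varphi$-independence from $B$ to $B'$, and then no finite $\varphi$-subtype can isolate.

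For $(i)\Rightarrow(ii)$ there is a genuine gap, and it sits exactly where you flag the step as ``most delicate''. Your extraction produces, at each stage, a parameter $\overline b$ such that both $\psi_C(\overline x)\wedge\varphi(\overline x;\overline b)$ and $\psi_C(\overline x)\wedge\neg\varphi(\overline x;\overline b)$ are consistent, where $\psi_C$ carries the \emph{single} sign pattern $q\upharpoonright C$. That is a branching condition relative to one fixed pattern; $\varphi$-independence of $\{\overline b_0,\dots,\overline b_n\}$ requires all $2^{n+1}$ sign patterns to be consistent. Passing to an indiscernible subsequence via Ramsey does not manufacture the missing patterns: over an indiscernible sequence you control alternation of $\varphi(\overline a;-)$ for a fixed $\overline a$, which is far weaker than shattering. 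So the iteration as described never forces a $\varphi$-independent set of size $n+1$, and the contradiction does not close. A secondary structural issue compounds this: you fix a realization $\overline a$ of $p$ in advance and then search for $C$ inside $B^*$, but nothing guarantees that \emph{this particular} $q=\tp_\varphi(\overline a/B^*)$ admits a small isolating restriction; the theorem only promises that \emph{some} extension of $p$ does.

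The paper avoids both problems by building the extension rather than restricting a pre-chosen one. It introduces \emph{good configurations} $C=\{c_{i,t}:i<K,\ t<2\}$, where the defining condition is that each pair $c_{j,0},c_{j,1}$ has the same $\Delta$-type over $B$ together with any transversal of the remaining pairs, for a specific finite set $\Delta$ of formulas. That $\Delta$-type agreement is exactly what lets one swap $c_{j,s(j)}$ for $c_{j,0}$ one coordinate at a time and thereby convert the single witnessed pattern into all $2^K$ patterns, yielding $K\le \ID(\varphi)$ directly---this is the mechanism your ``$0$-witness/$1$-witness'' intuition is gesturing at, but it needs the $\Delta$-type hypothesis to run. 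Isolation of $p_C$ for a \emph{maximal} good configuration is then a separate argument: one shows, via an ultrafilter/average-type construction, that a certain type $r_s(y)$ is not finitely satisfiable in $B$, and that failure produces a single formula implying all of $p_C$. Neither the swap mechanism nor the finite-satisfiability step is present in your sketch, and the Ramsey/alternation heuristic does not substitute for them.
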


We remark on some consequences of the theorem.

\begin{defn}\label{defin}
 Fix a partitioned formula $\varphi(\overline{x}; \overline{y})$, a $\varphi$-type $p(\overline{x})$, and a formula $\psi(\overline{y})$.  We say that $\psi$ \textbf{defines} $p$ if, for all $\overline{b} \in \dom(p)$, $\varphi(\overline{x}; \overline{b}) \in p(\overline{x})$ if and only if $\psi(\overline{b})$ holds.  We say that $\psi$ \textbf{$\varphi$-defines} $p$ if it defines $p$ and it is of the form $\psi(\overline{y}) = \forall \overline{x} ( \gamma(\overline{x}) \rightarrow \varphi(\overline{x}; \overline{y}) )$ for some $\varphi$-formula $\gamma(\overline{x})$.
\end{defn}

Merely requiring that a $\varphi$-type has a defining formula has no content.  Indeed, for any type $p \in S_\varphi(B)$, $p$ is defined by the formula $\varphi(\overline{a}; \overline{y})$ for any realization $\overline{a}$ of $p$.  The strength of having a defining formula is to have one with a controlled domain, preferably over $\dom(p)$.  It is known, for example, that for stable formulas $\varphi$, all $\varphi$-types $p$ have a defining formula over $\dom(p)$ \cite{shelah}, but, when $\dom(p)$ is an arbitrary set, it does not necessarily have a $\varphi$-defining formula over $\dom(p)$.

Notice that if $p$ is $\varphi$-isolated, then $p$ has a $\varphi$-defining formula $\psi$ over $\dom(p)$.  Namely, take the $\varphi$-formula $\gamma$ over $\dom(p)$ such that $p(\overline{x})$ is equivalent to $\gamma(\overline{x})$ and let $\psi(\overline{y}) = \forall \overline{x} ( \gamma(\overline{x}) \rightarrow \varphi(\overline{x}; \overline{y}) )$.  It is clear that if $\psi$ $\varphi$-defines $p$, then $\psi$ defines $p$, but the converse does not necessarily hold.  We immediately get the following corollary to Theorem \ref{elmextiso}.

\begin{cor}[Elementary $\varphi$-definability of types]\label{elmdeftypes}
 If $M \models T$, $\overline{y}$ is a list of variables, and $B$ is a set of elements of same sort as $\overline{y}$ from $M$, then there exists an elementary extension $(N; B') \succeq (M; B)$ such that, for all dependent formulas $\varphi(\overline{x}; \overline{y})$, for all $p(\overline{x}) \in S_\varphi(B)$, there exists $\psi(\overline{y})$ over $B'$ such that $\psi$ $\varphi$-defines $p$.
\end{cor}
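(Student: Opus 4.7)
The central difficulty is uniformity: Theorem \ref{elmextiso} provides, for each dependent formula $\varphi$ and each $p \in S_\varphi(B)$, a $\varphi$-isolated $\varphi$-extension $p'_p$ living in some elementary extension $(N_p; B_p) \succeq (M; B)$ that \emph{a priori} depends on the pair. The plan is to encode the existence of such a $p'_p$ as a partial $L_B$-type over $B$, and then use compactness in $L_B$ to realize all of these types simultaneously inside a single extension $(N; B')$.

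For fixed dependent $\varphi$ and $p \in S_\varphi(B)$, Theorem \ref{elmextiso} yields $(N_p; B_p) \succeq (M; B)$ and a $\varphi$-isolated $\varphi$-extension $p'_p$ of $p$ with $\dom(p'_p) \subseteq B_p$ and $|\dom(p'_p) \setminus B| \le 2 \cdot \ID(\varphi)$. Let $\gamma_p(\overline{x})$ be a $\varphi$-formula over $\dom(p'_p)$ equivalent to $p'_p$, let $\overline{c}_p$ enumerate the tuples in $\dom(p'_p) \setminus B$, and write $\gamma_p(\overline{x}; \overline{z})$ for the formula obtained by replacing the $\overline{c}_p$-parameters by matching variables $\overline{z}$ (the remaining parameters already lie in $B$). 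Let $t_p : B \to 2$ denote the choice function defining $p$, and form the partial $L_B$-type in $\overline{z}$ over $B$:
\begin{equation*}
 q_p(\overline{z}) = \{ P_B(z) : z \in \overline{z} \} \cup \{ \exists \overline{x}\, \gamma_p(\overline{x}; \overline{z}) \} \cup \{ \forall \overline{x}( \gamma_p(\overline{x}; \overline{z}) \rightarrow \varphi(\overline{x}; \overline{b})^{t_p(\overline{b})} ) : \overline{b} \in B \}.
\end{equation*}
The tuple $\overline{c}_p$ realizes $q_p$ inside $(N_p; B_p)$, so $q_p$ is consistent with $\mathrm{Th}_{L_B}(M; B)$.

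To finish, enrich $L_B$ with a fresh tuple of constants $\overline{c}^*_{\varphi,p}$ for each pair $(\varphi, p)$ with $\varphi$ dependent and $p \in S_\varphi(B)$, and add the formulas $q_p(\overline{c}^*_{\varphi,p})$ to $\mathrm{Th}_{L_B}(M; B)$. Any finite fragment of the resulting theory mentions only finitely many pairs, and those can be realized one at a time in a short chain of elementary extensions since each $q_p$ is individually consistent; hence the full theory is finitely consistent, and compactness yields a model, i.e.\ an elementary extension $(N; B') \succeq (M; B)$ in which every $q_p$ is realized by some tuple $\overline{c}'_p \subseteq B'$. Then $\psi'_{\varphi,p}(\overline{y}) = \forall \overline{x}(\gamma_p(\overline{x}; \overline{c}'_p) \rightarrow \varphi(\overline{x}; \overline{y}))$ is a $\varphi$-formula over $B'$ that $\varphi$-defines $p$: the $\exists \overline{x}$ clause of $q_p$ makes $\gamma_p(\overline{x}; \overline{c}'_p)$ consistent, while the $\forall \overline{x}$ clauses guarantee that, for each $\overline{b} \in B$, $\psi'_{\varphi,p}(\overline{b})$ holds exactly when $t_p(\overline{b}) = 1$. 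The one conceptual step is the encoding in the second paragraph; once the $L_B$-type $q_p$ is identified, compactness is routine.
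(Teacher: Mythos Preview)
Your argument is correct and follows essentially the same idea as the paper, just spelled out more explicitly. The paper's proof is a two-line application of saturation: fix any sufficiently saturated $(N;B') \succeq (M;B)$; then for each dependent $\varphi$ and each $p \in S_\varphi(B)$, Theorem~\ref{elmextiso} (together with the observation, made in the Remark closing Section~\ref{Section_proofofmainthm}, that the finitely many extra parameters needed for the isolated extension are determined by an $L_B$-type over $B$ and hence realized in any $|B|^+$-saturated extension) yields a $\varphi$-isolated $p'$ with $\dom(p') \subseteq B'$, and the isolating $\varphi$-formula immediately gives a $\varphi$-definition of $p$ over $B'$.

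Your construction of the partial $L_B$-type $q_p(\overline z)$ and the compactness argument with fresh constants is precisely the content hidden in the phrase ``sufficiently saturated'': you are manually verifying that a single elementary extension realizing all the $q_p$'s exists. So the two proofs are the same in substance; the paper just packages the compactness step as saturation. One small efficiency you could note: since each $q_p$ is a partial type over $B$ in disjoint fresh constants, any $|B|^+$-saturated $(N;B')$ already realizes every $q_p$, which is exactly the paper's shortcut.
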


\begin{proof}
 Fix $M \models T$ and $B$ from $M$ of the appropriate sort, and fix $(N; B') \succeq (M; B)$ sufficiently saturated.  Then, by Theorem \ref{elmextiso}, there exists $p'$ a $\varphi$-isolated elementary $\varphi$-extension of $p$ (with $\dom(p') \subseteq B'$).  Since $p'$ is $\varphi$-isolated, there exists $\psi$ (over $\dom(p') \subseteq B'$) that $\varphi$-defines $p'$.  Since $p \subseteq p'$, $\psi$ $\varphi$-defines $p$.
\end{proof}

Notice that Corollary \ref{elmdeftypes} is, on the one hand, stronger than standard definability of types for stable formulas, and, on the other hand, weaker.  We get that, for dependent formulas $\varphi$, $\varphi$-types are not only definable, but $\varphi$-definable.  However, the formula doing the defining is not over $\dom(p)$, but over $B'$ for some $(N; B') \succeq (M; \dom(p))$.

As in the stable case, this $\varphi$-definability of types leads to a notion of stable embeddability.

\begin{cor}[Elementary stable embeddability]\label{elmstabembed}
 If $M \models T$ for a dependent theory $T$, $\overline{y}$ is a list of variables, and $B$ is a set of elements of same sort as $\overline{y}$ from $M$, then there exists an elementary extension $(N; B') \succeq (M; B)$ such that, for all formulas $\varphi(\overline{y})$ over any elementary supermodel of $M$, there exists a formula $\psi(\overline{y})$ over $B'$ such that $\varphi(B) = \psi(B)$.  Moreover, $\psi(\overline{y}) = \forall \overline{x} ( \gamma(\overline{x}) \rightarrow \varphi(\overline{x}; \overline{y}) )$ for some $\varphi$-formula $\gamma$.
\end{cor}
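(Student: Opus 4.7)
The plan is to deduce this directly from Corollary \ref{elmdeftypes}. Since $T$ is dependent, every partitioned formula $\varphi(\overline{x};\overline{y})$ of $L$ is dependent, so a single sufficiently saturated elementary extension $(N;B') \succeq (M;B)$ produced by Corollary \ref{elmdeftypes} will $\varphi$-define every $\varphi$-type over $B$ simultaneously for all partitioned formulas. I would fix such an extension at the outset, independently of the formula $\varphi(\overline{y})$ to be approximated.

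Now given any formula $\varphi(\overline{y})$ with parameters from some elementary supermodel $M' \succeq M$, I would parse it as $\varphi(\overline{a};\overline{y})$, where $\varphi(\overline{x};\overline{y})$ is a partitioned formula of $L$ and $\overline{a}$ is a tuple from $M'$ of the same sort as $\overline{x}$. The key observation is that the set $\varphi(B) = \{\overline{b} \in B : M' \models \varphi(\overline{a};\overline{b})\}$ is completely determined by the $\varphi$-type $p := \tp_\varphi(\overline{a}/B) \in S_\varphi(B)$: namely, $\overline{b} \in \varphi(B)$ if and only if $\varphi(\overline{x};\overline{b}) \in p$. Applying Corollary \ref{elmdeftypes} to this $p$ yields a formula $\psi(\overline{y})$ over $B'$ that $\varphi$-defines $p$; by Definition \ref{defin}, $\psi$ automatically has the required ``moreover'' form $\forall \overline{x}(\gamma(\overline{x}) \to \varphi(\overline{x};\overline{y}))$ for some $\varphi$-formula $\gamma$.

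To finish, I would verify $\psi(B) = \varphi(B)$: for every $\overline{b} \in B = \dom(p)$, by the definition of ``$\psi$ defines $p$'', $\psi(\overline{b})$ holds iff $\varphi(\overline{x};\overline{b}) \in p$, iff $M' \models \varphi(\overline{a};\overline{b})$, iff $\overline{b} \in \varphi(B)$. There is no real obstacle here; the substantive work has already been done in Corollary \ref{elmdeftypes}, and the present corollary is a matter of reinterpreting the $\varphi$-definability of $\tp_\varphi(\overline{a}/B)$ as the ability to approximate subsets of $B$ externally defined by parameters outside $M$ by sets internally defined over the expansion $B'$.
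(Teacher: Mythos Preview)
Your proof is correct and follows essentially the same approach as the paper: fix a sufficiently saturated $(N;B')\succeq (M;B)$, parse the externally defined formula as $\varphi_0(\overline{a};\overline{y})$ for a parameter-free $\varphi_0$, and apply Corollary~\ref{elmdeftypes} to $p=\tp_{\varphi_0}(\overline{a}/B)$ to obtain the $\varphi_0$-defining $\psi$. Your write-up is slightly more explicit in checking $\psi(B)=\varphi(B)$ and in noting that $(N;B')$ is fixed uniformly at the outset, but the argument is the same.
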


\begin{proof}
 Fix $(N; B') \succeq (M; B)$ sufficiently saturated as above.  For any fixed formula $\varphi(\overline{y})$, say $\varphi$ is over $N' \succeq M$, let $\varphi(\overline{y}) = \varphi_0(\overline{a}; \overline{y})$ for $\varphi_0(\overline{x}; \overline{y})$ over $\emptyset$ and $\overline{a}$ from $N'$, and let $p(\overline{x}) = \tp_{\varphi_0}(\overline{a} / B)$.  As $\varphi_0$ is dependent, by Corollary \ref{elmdeftypes}, there exists $\psi(\overline{y})$ over $B'$ that $\varphi_0$-defines $p$.  Then, by definition, $\varphi(B) = \psi(B)$.
\end{proof}

\section{The Proof of the Isolated Extension Theorem}\label{Section_proofofmainthm}

To aid notation, assume that the length of $\overline{x}$ and the length of $\overline{y}$ is $1$.  Other than having more complicated notation, the general case is identical.

First, to show (ii) implies (i), we will exhibit the contrapositive.  Assume then that $\varphi(x;y)$ is independent.  By compactness, there exists a model $M$ with an infinite $\varphi$-independent set $B$.  Let $(N; B') \succeq (M; B)$.  By elementarity, it follows that all finite subsets of $B'$ are $\varphi$-independent.  Let $p'$ be any extension of $p$ to a $\varphi$-type such that $\dom(p') \subseteq B'$.  Fix any finite subtype $p_0(x) \subseteq p'(x)$.  Now, for any finite $\varphi$-type $p_1(x)$ with $p_0(x) \subsetneq p_1(x) \subseteq p'(x)$, since $\dom(p_1)$ is $\varphi$-independent, we cannot have that $p_0(x) \vdash p_1(x)$.  Thus, $p_0(x) \not\vdash p'(x)$.  This shows that no elementary $\varphi$-extension of $p$ is $\varphi$-isolated.  Therefore, (ii) implies (i).

To show (i) implies (ii), we will first show that the following proposition holds:

\begin{prop}\label{AlmostDef}
 For any dependent formula $\varphi(x;y)$ in a theory $T$, for any model $M \models T$, for any partial type $\Theta(y)$ over $\emptyset$, and for any $B \subseteq \Theta(M)$, there exists $N \succeq M$ and $C \subseteq \Theta(N)$ with $|C| \le 2 \cdot \ID(\varphi)$ and an extension $p'(x) \in S_\varphi(B \cup C)$ of $p(x)$ that is $\varphi$-isolated.
\end{prop}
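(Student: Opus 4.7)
The plan is to reformulate the conclusion as the existence of a single finite $\varphi$-formula of controlled length, and then to build that formula greedily, using the NIP hypothesis to bound its length.

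To obtain the required $\varphi$-isolated elementary $\varphi$-extension of $p$, it suffices to produce, in some $N \succeq M$, a consistent $\varphi$-formula
\[
\gamma(x) = \bigwedge_{i<k} \varphi(x; c_i^+) \wedge \bigwedge_{j<\ell} \neg \varphi(x; c_j^-)
\]
with all $c_i^+, c_j^- \in \Theta(N)$, total length $k + \ell \le 2 \cdot \ID(\varphi)$, that is consistent with $p$ and satisfies $\gamma(x) \vdash \varphi(x; b)^{p(b)}$ for every $b \in B$. Letting $C$ be the set of parameters of $\gamma$, the $\varphi$-type $p' = p \cup \{\varphi(x; c_i^+) : i < k\} \cup \{\neg \varphi(x; c_j^-) : j < \ell\}$ then lies in $S_\varphi(B \cup C)$ and is $\varphi$-isolated by $\gamma$. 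Passing to a sufficiently saturated $N \succeq M$ and applying compactness to the candidate $(k+\ell)$-tuples subject to the partial type $\Theta$, the existence of such $\gamma$ over all of $B$ reduces to the same statement for every finite $B_0 \subseteq B$.

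For fixed finite $B_0$, I would construct the required $\gamma_{B_0}$ inductively. Fix a realization $a$ of $p \restriction B_0$ and set $\gamma_0 = \top$. At stage $k$, if $\gamma_k(x)$ already implies $\varphi(x; b)^{p(b)}$ for every $b \in B_0$, we stop. Otherwise, pick a ``bad'' realization $a'$ of $\gamma_k$ with $\varphi(a'; b) \neq \varphi(a; b)$ for some $b \in B_0$, and add a new positive or negative conjunct (with parameter in $\Theta(N)$) that separates $a$ from $a'$. The main obstacle is bounding this procedure by $2 \cdot \ID(\varphi)$ rather than by $|B_0|$. I expect this to follow by tracking positive and negative conjuncts separately, each bounded by $\ID(\varphi)$: if the greedy construction ever required more than $\ID(\varphi)$ essential positive (respectively negative) conjuncts, then their parameters together with the bad-realization witnesses from each intervening stage would assemble into a $\varphi$-independent set of size $\ID(\varphi) + 1$, contradicting dependence of $\varphi$. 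Executing this extraction precisely, and arranging that each new parameter can always be chosen within $\Theta(N)$, is the delicate combinatorial heart of the proof.
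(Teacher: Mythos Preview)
Your reformulation misreads the proposition. The bound $|C| \le 2 \cdot \ID(\varphi)$ constrains only the number of \emph{new} parameters $C = \dom(p') \setminus B$, not the length of the isolating $\varphi$-formula. The isolating finite subtype $p_0 \subseteq p'$ may (and in general must) draw many parameters from $B$ itself. In fact the stronger statement you are aiming for---that some $\gamma$ with at most $2\cdot\ID(\varphi)$ conjuncts over $\Theta(N)$ implies all of $p$---is false: the example in Section~\ref{Section_stablesetting} (an equivalence relation with classes meeting $B$ in $1,2,3,\dots$ elements) exhibits $\varphi$-types $p_n$ for which every isolating $\varphi$-subtype of every elementary $\varphi$-extension must contain at least $n$ parameters, while $\ID(\varphi)$ is fixed. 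So your compactness reduction and greedy construction are aimed at the wrong target, and the bound you hope to extract (``more than $\ID(\varphi)$ essential positive conjuncts would yield a $\varphi$-independent set'') cannot hold. Already for $\varphi(x;y)=(x<y)$ in a dense linear order with $B_0$ an increasing chain, the greedy procedure with parameters from $B_0$ needs $|B_0|$ conjuncts although $\ID(\varphi)=1$, and no $\varphi$-independent set is produced; you give no mechanism for choosing parameters outside $B_0$ that would repair this.

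The paper's argument is structurally different. One builds a \emph{good configuration} $C=\{c_{i,0},c_{i,1}:i<K\}$ of pairs in $\Theta(N)$ such that $p\cup\{\varphi(x;c_{i,t})^t\}$ is consistent and, crucially, each pair $c_{j,0},c_{j,1}$ shares the same $\Delta$-type over $B$ and the remaining $c$'s, where $\Delta$ records which sign patterns on $n$ instances of $\varphi$ are consistent. This $\Delta$-type condition is what forces $K\le\ID(\varphi)$, giving $|C|\le 2\cdot\ID(\varphi)$. Isolation of $p_C=p\cup\{\varphi(x;c_{i,t})^t\}$ is then obtained by a separate finite-satisfiability and compactness argument: for a \emph{maximal} good configuration, no completion $s$ of $p_C$ leaves a certain type $r_s$ finitely satisfiable in $B$, and compactness over all completions produces a single formula over $B\cup C$---typically with many $B$-parameters---equivalent to $p_C$. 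Neither a greedy separation step nor a direct $\varphi$-independence extraction appears.
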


Fix $\varphi(x;y)$ a dependent formula in a theory $T$ and $\Theta(y)$ any partial type over $\emptyset$.  Let $n = \ID(\varphi)$, the independence dimension of $\varphi(x;y)$.  Fix $M \models T$, $N \succeq M$ sufficiently saturated, $B \subseteq \Theta(M)$, and $p(x) \in S_\varphi(B)$.  If $B$ is finite, $p$ is already isolated, so assume that $B$ is infinite.  Define a set of formulas $\Delta(y;z_0, ..., z_{n-1})$ as follows:

\begin{equation}
 \Delta(y;z_0, ..., z_{n-1}) = \left\{ \exists x \left( \varphi(x; y)^t \wedge \bigwedge\limits_{i < n} \varphi(x; z_i)^{s(i)} \right) : t < 2, s \in {}^n 2 \right\}
\end{equation}

We will now define the notion of a good configuration.  This will end up allowing us to build up the external $C$ in at most $\ID(\varphi)$ steps (adding two elements at a time).

\begin{defn}\label{gconfig}
 A \textbf{good configuration} of $p$ of size $K$ is a sequence $C = \{ c_{i,t} : i < K, t < 2 \}$ such that the following conditions hold:

 \begin{itemize}
  \item [(i)] $c_{i,t} \models \Theta(y)$ for all $i < K$, $t < 2$;
  \item [(ii)] $p(x) \cup \{ \varphi(x; c_{j,t})^t : j < K, t < 2 \}$ is consistent; and
  \item [(iii)] For all $s \in {}^K 2$, all $j < K$, $c_{j,0}$ and $c_{j,1}$ have the same $\Delta$-type over $B \cup \{ c_{i,s(i)} : i \neq j \}$.
 \end{itemize}

 If $C$ is a good configuration of $p$ of size $K$, then let $p_C(x) = p(x) \cup \{ \varphi(x; c_{j,t})^t : j < K, t < 2 \}$.
\end{defn}

The first thing to note is that these good configurations are used to extend the type $p$ in a very specific way.  These could, \textit{a priori}, be arbitrarily large.  However, the fact that $\varphi$ is dependent forces good configurations to be of bounded size.

\begin{lem}\label{gconfigbound}
 If $C = \{ c_{i,t} : i < K, t < 2 \}$, is a good configuration of $p$ of size $K$, then $K \le n = \ID(\varphi)$.
\end{lem}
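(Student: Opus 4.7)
The plan is to argue by contradiction. Suppose $K \ge n + 1$; I claim that then $\{c_{i,0} : i \le n\}$ is a $\varphi$-independent set of size $n+1$, directly contradicting the definition $\ID(\varphi) = n$.

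To verify $\varphi$-independence, fix an arbitrary $\sigma \in {}^{n+1} 2$ and aim to show that $\{\varphi(x; c_{i,0})^{\sigma(i)} : i \le n\}$ is consistent. Condition (ii) of Definition \ref{gconfig} immediately gives consistency of $\{\varphi(x; c_{i,\sigma(i)})^{\sigma(i)} : i \le n\}$, which has the correct signs $\sigma(i)$ but the wrong second subscripts. The strategy is to rewrite each occurrence of $c_{i,\sigma(i)}$ as $c_{i,0}$ one index at a time by invoking condition (iii).

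The swap at index $j$ runs as follows. Assuming that the indices $i < j$ have already been swapped, define $s \in {}^K 2$ by $s(i) = 0$ for $i < j$, $s(i) = \sigma(i)$ for $j < i \le n$, and arbitrarily for $i > n$. Form the formula $\theta_j(y) := \exists x \bigl( \varphi(x; y)^{\sigma(j)} \wedge \bigwedge_{i \neq j,\, i \le n} \varphi(x; c_{i, s(i)})^{\sigma(i)} \bigr)$. After enumerating the $n$ remaining parameters as $z_0, \ldots, z_{n-1}$, this is exactly an instance of a formula in $\Delta(y; z_0, \ldots, z_{n-1})$ with the $z$'s evaluated in $B \cup \{c_{i, s(i)} : i \neq j\}$. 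Condition (iii) tells us $c_{j,0}$ and $c_{j,\sigma(j)}$ have the same $\Delta$-type over this set, so $\theta_j(c_{j,0}) \leftrightarrow \theta_j(c_{j,\sigma(j)})$, which is exactly the induction step. Iterating $j = 0, 1, \ldots, n$ produces the desired consistency.

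The key design feature that makes this go through is that $\Delta$ was introduced with precisely $n$ auxiliary parameters $z_0, \ldots, z_{n-1}$, matching exactly the number of ``other'' elements we must hold fixed while swapping inside an $(n+1)$-element subset of the configuration; so a single swap at index $j$ is literally a question about one $\Delta$-formula. I do not foresee a serious obstacle: the only care required is in choosing $s$ at each stage so that the relevant parameters lie in $B \cup \{c_{i, s(i)} : i \neq j\}$, and the prescription above does this automatically.
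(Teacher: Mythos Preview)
Your proposal is correct and follows essentially the same argument as the paper's own proof: both start from condition~(ii) to obtain consistency of $\{\varphi(x;c_{i,\sigma(i)})^{\sigma(i)} : i \le n\}$, then inductively swap each $c_{j,\sigma(j)}$ to $c_{j,0}$ via condition~(iii), exploiting that the relevant swap is expressed by a single $\Delta$-formula with exactly $n$ parameters. You are in fact slightly more explicit than the paper about extending $s$ to all of ${}^{K}2$ so that condition~(iii) applies literally.
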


\begin{proof}
 Suppose not, i.e. $K > n$.  Now, for each $s \in {}^{n+1} 2$, notice that

 \begin{equation}\label{gcexiststatement}
  \models \exists x \bigwedge_{i < n+1} \varphi(x; c_{i, s(i)})^{s(i)}
 \end{equation}

 because $\{ \varphi(x; c_{i, s(i)})^{s(i)} : i < n + 1 \}$ is a consistent type.  Now, notice that, for any $j \le n$,

 \begin{multline}\label{keygcjump}
  \exists x \left( \bigwedge_{i < j} \varphi(x; c_{i, 0})^{s(i)} \wedge \varphi(x; c_{j, s(j)})^{s(j)} \wedge \bigwedge_{j < i < n+1} \varphi(x; c_{i, s(i)})^{s(i)} \right) \Longrightarrow \\
  \exists x \left( \bigwedge_{i < j} \varphi(x; c_{i, 0})^{s(i)} \wedge \varphi(x; c_{j, 0})^{s(j)} \wedge \bigwedge_{j < i < n+1} \varphi(x; c_{i, s(i)})^{s(i)} \right)
 \end{multline}

 because $c_{j,0}$ and $c_{j,1}$ have the same $\Delta$-type over $\{ c_{i,0} : i < j \} \cup \{ c_{i, s(i)} : j < i < n + 1 \}$.  Starting with (\ref{gcexiststatement}), then using (\ref{keygcjump}) and induction, we get that:

 \begin{equation}
  \models \exists x \bigwedge_{i < n+1} \varphi(x; c_{i, 0})^{s(i)}
 \end{equation}

 But this holds for any $s \in {}^{n+1} 2$.  This contradicts the fact that $n = \ID(\varphi)$.
\end{proof}

Now that we have good configurations, we need a sufficient condition for taking a good configuration and building a larger one out of it.  Clearly any new $d_0$ and $d_1$ we would like to add on must realize $\Theta$ and must be so that $\neg \varphi(x; d_0) \wedge \varphi(x; d_1)$ is consistent with $p_C(x)$.  However, the third condition for a good configuration is a bit tricky.  Not only do $d_0$ and $d_1$ have to have the same $\Delta$-type over $B \cup \{ c_{i,s(i)} : i < K \}$, but also each $c_{j,0}$ and $c_{j,1}$ have to have the same $\Delta$-type over $B \cup \{ c_{i,s(i)} : i \neq j \} \cup \{ d_t \}$.  We now give a sufficient condition for being able to add on to good configurations.

\begin{lem}\label{addtogconfig}
 If $C = \{ c_{i,t} : i < K, t < 2 \}$ is a good configuration of $p$, and there exists $d_0$, $d_1$ such that:

 \begin{itemize}
  \item [(i)] $d_0, d_1 \models \Theta(y)$;
  \item [(ii)] $p_C(x) \cup \{ \varphi(x; d_t)^t : t < 2 \}$ is consistent;
  \item [(iii)] $\tp_\Delta(d_0 / B \cup C) = \tp_\Delta(d_1 / B \cup C)$; and
  \item [(iv)] $\tp_\Delta(d_0 / B \cup C)$ is finitely satisfiable in $B$.
 \end{itemize}

 Then, $C \cup \{ d_0, d_1 \}$ is a good configuration of $p$ (of size $K + 1$).
\end{lem}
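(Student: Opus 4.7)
The plan is to verify the three conditions of Definition \ref{gconfig} for the extended sequence $C' := C \cup \{d_0, d_1\}$, where we identify $d_t$ with a new element $c_{K,t}$. Conditions (i) and (ii) follow immediately from the corresponding hypotheses of the lemma together with the fact that $C$ itself is a good configuration. The substantive work is condition (iii): for all $s \in {}^{K+1} 2$ and all $j < K+1$, the elements $c_{j,0}$ and $c_{j,1}$ must share a $\Delta$-type over $B \cup \{c_{i,s(i)} : i < K+1, i \neq j\}$.

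I would split condition (iii) into two cases. When $j = K$, we need $d_0$ and $d_1$ to share a $\Delta$-type over $B \cup \{c_{i,s(i)} : i < K\}$, which is immediate from hypothesis (iii), since that set lies inside $B \cup C$. When $j < K$, write $A := B \cup \{c_{i,s(i)} : i < K, i \neq j\}$ and $d := d_{s(K)}$; the goal is that $c_{j,0}$ and $c_{j,1}$ have the same $\Delta$-type over $A \cup \{d\}$. Goodness of $C$ already gives agreement over $A$, so suppose toward contradiction that some $\Delta$-formula $\delta(y; z_0, \ldots, z_{n-1})$ and tuple $\overline{e}$ from $A \cup \{d\}$ witness $\models \delta(c_{j,0}; \overline{e}) \wedge \neg \delta(c_{j,1}; \overline{e})$. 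The element $d$ must appear in $\overline{e}$, else we already contradict goodness of $C$.

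The key step is to rewrite, for each $t < 2$, the formula $\delta(c_{j,t}; \overline{e})$ as a $\Delta$-formula $\delta_t^*(y^*; \overline{a}_t)$ evaluated at $y^* = d$, with $y^*$ taking over the distinguished $y$-role of $\Delta$, $c_{j,t}$ absorbed into the $z$-parameters, and the list padded (by duplicating $c_{j,t}$) to the required length $n$. If $d$ occupies positions of $\overline{e}$ whose exponents in $\delta$ conflict, then $\delta(y; \overline{e})$ is already inconsistent and there is nothing to prove; otherwise idempotency of conjunction makes the rewriting equivalent to the original. Crucially, each $\overline{a}_t$ now lies in $B \cup C$. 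Then $\delta_0^*(y^*; \overline{a}_0) \wedge \neg \delta_1^*(y^*; \overline{a}_1)$ is a finite subset of the complete $\Delta$-type $\tp_\Delta(d / B \cup C)$, so hypothesis (iv) yields some $b \in B$ realizing it (using (iii) in the case $d = d_1$ to transfer finite satisfiability). Replacing $d$ with $b$ in $\overline{e}$ produces a tuple $\overline{e}'$ from $A$ with $\models \delta(c_{j,0}; \overline{e}') \wedge \neg \delta(c_{j,1}; \overline{e}')$, contradicting goodness of $C$.

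The main obstacle is this rewriting step: one must carefully track how the distinguished $y$-variable of a $\Delta$-formula swaps roles when $d$ is moved from a parameter to the free position, and verify that the resulting expression is a genuine instance of $\Delta$ with parameters drawn from $B \cup C$ (this is why the finite satisfiability hypothesis is stated for $\Delta$ rather than for $\varphi$ alone). Once that bookkeeping is handled, finite satisfiability applied to a two-element subset of the complete $\Delta$-type does the real work.
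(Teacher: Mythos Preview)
Your argument is correct and follows essentially the same route as the paper's own proof: reduce to the case $j<K$, observe that $d$ must occur among the parameters, then use finite satisfiability in $B$ to replace $d$ by some $b\in B$ and contradict condition~(iii) for the original configuration $C$. You are in fact more careful than the paper on two points it leaves implicit: the paper writes $\overline{e}=d_t{}^{\frown}\overline{e}'$ as though $d_t$ occurs exactly once, whereas you handle multiple occurrences via the conflicting-exponents/idempotency observation; and you spell out the variable-role swap that turns $\delta(c_{j,t};\overline{e})$ into a genuine $\Delta$-instance in $d$ with parameters in $B\cup C$, together with the appeal to hypothesis~(iii) when $d=d_1$, both of which the paper invokes silently when it applies condition~(iv).
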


\begin{proof}
 Clearly all conditions for $C \cup \{ d_0, d_1 \}$ to be a good configuration of $p$ are met except perhaps the condition that $c_{j,0}$ and $c_{j,1}$ have the same $\Delta$-type over $B \cup \{ c_{i,s(i)} : i \neq j \} \cup \{ d_t \}$ for all $s \in {}^K 2$, $t < 2$.  So suppose this fails, and fix the $s \in {}^K 2$ and $t < 2$ where this fails.

 Then there exists $\delta$ either an element of $\Delta$ or the negation of an element of $\Delta$ such that $N \models \delta(c_{j,0}, \overline{e}) \wedge \neg \delta(c_{j,1}, \overline{e})$ for some $\overline{e}$ from $B \cup \{ c_{i,s(i)} : i \neq j \} \cup \{ d_t \}$.  Since $c_{j,0}$ and $c_{j,1}$ have the same $\Delta$-type over $B \cup \{ c_{i,s(i)} : i \neq j \}$, we must have that $\overline{e} = d_t \concat \overline{e}'$ for some $\overline{e}'$ from $B \cup \{ c_{i,s(i)} : i \neq j \}$.  Therefore, we get that:

 \begin{equation}
  N \models \delta(c_{j,0}, d_t, \overline{e}') \wedge \neg \delta(c_{j,1}, d_t, \overline{e}')
 \end{equation}

 By condition (iv) of the hypothesis, there exists $b \in B$ such that:

 \begin{equation}
  N \models \delta(c_{j,0}, b, \overline{e}') \wedge \neg \delta(c_{j,1}, b, \overline{e}')
 \end{equation}

 But, as $b \concat \overline{e}'$ is from $B \cup \{ c_{i,s(i)} : i \neq j \}$, this contradicts the fact that $c_{j,0}$ and $c_{j,1}$ have the same $\Delta$-type over $B \cup \{ c_{i,s(i)} : i \neq j \}$.
\end{proof}

Fix $C$ a maximal good configuration of $p$, so $p_C(x)$ is a $\varphi$-type over $B \cup C$.  Let $s(x)$ be any extension of $p_C(x)$ to a complete type over $B \cup C$.  Define $r_s(y)$ as follows:

\begin{equation}\label{defofrs}
 r_s(y) = \{ \exists x (\varphi(x; y)^t \wedge \psi(x)) : \psi \in s, t < 2 \} \cup \Theta(y)
\end{equation}

\begin{lem}\label{rnotfinsat}
 $r_s$ is not finitely satisfied in $B$.
\end{lem}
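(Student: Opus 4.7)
The plan is to derive a contradiction from the hypothesis that $r_s$ is finitely satisfied in $B$, by using Lemma \ref{addtogconfig} to extend the good configuration $C$ and so contradict its maximality. That is, I would produce $d_0, d_1 \in N$ meeting all four hypotheses of Lemma \ref{addtogconfig}, yielding that $C \cup \{d_0, d_1\}$ is a good configuration strictly larger than $C$.

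To set up, extend $r_s(y)$ to a complete $L$-type $q(y)$ over $B \cup C$ that remains finitely satisfied in $B$ (standard ultrafilter/compactness argument). By saturation of $N$, choose $d_0 \in N$ realizing $q$. Since the $t = 0$ clauses of $r_s \subseteq q$ make $s(x) \cup \{\neg \varphi(x; d_0)\}$ finitely consistent, saturation again yields some $a \in N$ realizing it. I would then pick $d_1 \in N$ realizing $q(y) \cup \{\varphi(a; y)\}$ (assuming its consistency), so that $a$ witnesses the consistency of $p_C(x) \cup \{\neg \varphi(x; d_0), \varphi(x; d_1)\}$ required by Lemma \ref{addtogconfig}(ii); hypotheses (i), (iii), (iv) then follow from $\Theta \subseteq r_s \subseteq q$, from $\tp(d_0 / B \cup C) = \tp(d_1 / B \cup C) = q$ (hence identical $\Delta$-types), and from the finite satisfiability of $q$ in $B$, respectively.

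The only real obstacle is verifying consistency of $q(y) \cup \{\varphi(a; y)\}$. By compactness, it suffices, given a finite $q_0 \subseteq q$, to realize $\bigwedge q_0(y) \wedge \varphi(a; y)$. Let $\eta(x) := \exists y (\bigwedge q_0(y) \wedge \varphi(x; y))$, a formula over $B \cup C$; since $s$ is a complete type over $B \cup C$, either $\eta \in s$ or $\neg \eta \in s$. I would rule out the latter: if $\neg \eta \in s$, taking $\psi := \neg \eta$ and $t = 1$ in the definition of $r_s$ gives $\exists x (\varphi(x; y) \wedge \neg \eta(x)) \in r_s \subseteq q$. Realizing $q$ by any $y_0 \in N$ then produces an $x_0$ with $\varphi(x_0; y_0) \wedge \neg \eta(x_0)$; but $\neg \eta(x_0)$ unpacks to $\forall y (\bigwedge q_0(y) \to \neg \varphi(x_0; y))$, and $y_0 \models q_0$ forces $\neg \varphi(x_0; y_0)$, contradicting $\varphi(x_0; y_0)$. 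Hence $\eta \in s$, so $\eta(a)$ holds (because $a \models s$), giving the required realization and completing the argument.
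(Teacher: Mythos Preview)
Your proof is correct and follows essentially the same approach as the paper: extend $r_s$ to a complete type $q$ over $B \cup C$ finitely satisfiable in $B$, use completeness of $s$ to show the relevant existential formulas $\exists y(\bigwedge q_0(y) \wedge \varphi(x;y))$ lie in $s$, and then produce $d_0,d_1$ satisfying the hypotheses of Lemma~\ref{addtogconfig}, contradicting maximality of $C$. The only cosmetic differences are that the paper realizes $(a,d_0,d_1)$ simultaneously from a single joint type $u(x,y_0,y_1)$ rather than sequentially, and it argues directly (from consistency of $s(x)\cup\{\varphi(x;y)^t\}\cup q(y)$) that the existential formulas belong to $s$, whereas you derive a contradiction from $\neg\eta\in s$; these are logically interchangeable.
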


\begin{proof}
 Suppose, by means of contradiction, that $r_s$ is finitely satisfied in $B$.  Let $\mathcal{D}$ be an ultrafilter on $B$ such that for all $\delta(y) \in r_s(y)$, $\delta(B) \in \mathcal{D}$ (this exists by finite satisfiability of $r_s$ in $B$).  Let $q(y) = \Avtp(\mathcal{D}, B \cup C)$, the average type of $\mathcal{D}$ over $B \cup C$.  That is, for any formula $\delta(y)$ over $B \cup C$, $\delta(y) \in q(y)$ if and only if $\delta(B) \in \mathcal{D}$.  Then $q \in S(B \cup C)$, $q$ extends $r_s$, and $q$ is finitely satisfied in $B$.  Let $q' = q \upharpoonright_\Delta$.

 Now notice that $\{ \exists x (\varphi(x; y)^t \wedge \psi(x)) \} \cup q(y)$ is consistent for each $\psi \in s$ and each $t < 2$.  Since $s$ is closed under conjunction, by compactness we get that $s(x) \cup \{ \varphi(x; y)^t \} \cup q(y)$ is consistent for each $t < 2$.  Therefore, $s(x) \cup \{ \varphi(x; y)^t \} \cup q'(y) \cup \{ \theta(y) \}$ is consistent for each $t < 2$ and each $\theta(y)$ a finite conjunction of formulas from $\Theta(y)$ (as $q'(y) \cup \Theta(y) \subseteq q(y)$).  This means that $s(x) \cup \{ \exists y ( \varphi(x; y)^t \wedge \theta(y) \wedge \psi(y) ) \}$ is consistent for each $\psi(y)$ a finite conjunction of formulas from $q'(y)$ and each $\theta(y)$ a finite conjunction of formulas from $\Theta(y)$.  But, since $s$ is a complete type in the $x$ variable, $s$ decides all formulas of the form $\exists y ( \varphi(x; y)^t \wedge \theta(y) \wedge \psi(y) )$.  Therefore, we get that:

 \begin{equation}
  \exists y ( \varphi(x; y)^t \wedge \theta(y) \wedge \psi(y) ) \in s(x)
 \end{equation}

 Choose $\psi_t(x)$ a finite conjunction of formulas from $q'(y)$ and $\theta_t(y)$ a finite conjunction of formulas from $\Theta(y)$ for both $t < 2$.  Then $\exists y_t ( \varphi(x; y_t)^t \wedge \theta_t(y_t) \wedge \psi_t(y_t) ) \in s(x)$ for both $t < 2$.  Therefore, we get that:

 \begin{equation}
  s(x) \cup \{ \exists y_0 ( \neg \varphi(x; y_0) \wedge \theta_0(y_0) \wedge \psi_0(y_0) ) \} \cup \{ \exists y_1 ( \varphi(x; y_1) \wedge \theta_1(y_1) \wedge \psi_1(y_1) ) \}
 \end{equation}

 is consistent.  Now, by compactness,

 \begin{equation}
  u(x,y_0,y_1) = s(x) \cup \{ \neg \varphi(x; y_0) \wedge \varphi(x; y_1) \} \cup q'(y_0) \cup q'(y_1) \cup \Theta(y_0) \cup \Theta(y_1)
 \end{equation}

 is consistent.  So, taking any realization $(a, d_0, d_1)$ of $u(x,y_0,y_1)$ from $N$, we see that $d_0, d_1 \models \Theta(y)$, $d_0, d_1 \models q'(y)$, and $p_C(x) \cup \{ \varphi(x; d_t)^t : t < 2 \}$ is consistent.  So conditions (i), (ii), and (iii) of Lemma \ref{addtogconfig} are met.  However, since $q$ is finitely satisfied in $B$, $q'$ is finitely satisfied in $B$.  Therefore, condition (iv) of Lemma \ref{addtogconfig} is met, so $C \cup \{ d_0, d_1 \}$ is a good configuration of $p$.  This contradicts the maximality of $C$.
\end{proof}

We will now show how the non-finite-satisfiability of $r_s$ in $B$ leads to a formula definition of $p_C(x)$.

\begin{lem}\label{defnblty}
 For any $C$ a maximal good configuration of $p$ and any $s(x) \in S(B \cup C)$ an extension of $p_C(x)$, there exists a formula $\gamma(x) \in s(x)$ such that $\gamma(x) \vdash p_C(x)$.
\end{lem}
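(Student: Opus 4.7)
The plan is to use Lemma \ref{rnotfinsat} to extract a single formula $\psi(x) \in s(x)$ that already decides every $\varphi(x; b)$ for $b \in B$ in the way dictated by $p$, and then to append the finitely many formulas coming from the good configuration $C$.

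First I would apply Lemma \ref{rnotfinsat} to obtain a finite subset of $r_s(y)$ not satisfied by any $b \in B$. Since $B \subseteq \Theta(M)$ and $M \preceq N$, every $b \in B$ already realizes $\Theta$, so the $\Theta$-formulas in this subset can be discarded, leaving formulas $\psi_0, \ldots, \psi_{k-1} \in s$ and bits $t_0, \ldots, t_{k-1} < 2$ such that no $b \in B$ satisfies $\bigwedge_{j < k} \exists x\,(\varphi(x; b)^{t_j} \wedge \psi_j(x))$. Put $\psi(x) = \bigwedge_{j < k} \psi_j(x)$; since $s$ is a complete type and hence closed under finite conjunction, $\psi \in s$. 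Using $\psi \vdash \psi_j$, for each $b \in B$ some index $j$ witnesses the failure, yielding $\psi(x) \vdash \neg \varphi(x; b)^{t_j}$. Hence $\psi$ decides $\varphi(x; b)$ for every $b \in B$.

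Next, the direction of this decision must agree with $p$: since $\psi \in s$, $s$ is a consistent complete type, and $s$ extends $p$, writing $\epsilon(b) \in \{0, 1\}$ for the bit with $\varphi(x; b)^{\epsilon(b)} \in p$, consistency forces $\psi(x) \vdash \varphi(x; b)^{\epsilon(b)}$, so $\psi(x) \vdash p(x)$. Finally, I would define
\[
\gamma(x) = \psi(x) \wedge \bigwedge_{j < K,\ t < 2} \varphi(x; c_{j, t})^t,
\]
which is a genuine formula since $K \le \ID(\varphi) < \omega$ by Lemma \ref{gconfigbound}. Each conjunct lies in $s$ (the formulas $\varphi(x; c_{j,t})^t$ come from $p_C \subseteq s$), so $\gamma \in s$, and by construction $\gamma \vdash p(x) \cup \{\varphi(x; c_{j,t})^t : j < K,\ t < 2\} = p_C(x)$. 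The only point requiring any care, the passage from ``$\psi$ decides $\varphi(x; b)$'' to ``$\psi$ decides it in agreement with $p$'', is immediate from the consistency of $s$, so I do not expect any serious obstacle beyond what Lemma \ref{rnotfinsat} already provides.
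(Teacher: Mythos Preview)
Your proof is correct and follows essentially the same approach as the paper's: you use Lemma~\ref{rnotfinsat} to extract finitely many $\psi_j \in s$ whose conjunction decides $\varphi(x;b)$ for every $b \in B$, observe via consistency of $s \supseteq p$ that the decision agrees with $p$, and then conjoin the finitely many formulas $\varphi(x;c_{j,t})^t$ from the good configuration to obtain $\gamma$. The paper's argument is organized identically, including the remark that the $\Theta$-formulas in the failing finite subset of $r_s$ can be ignored since every $b \in B$ realizes $\Theta$.
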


\begin{proof}
 Consider $r_s$ as given above.  Then, since $r_s$ is not finitely satisfiable in $B$, there exists $m < \omega$ and $\psi_\ell(x) \in s(x)$ for each $\ell < m$ such that, for all $b \in B$, $N \models \neg \exists x (\varphi(x; b)^t \wedge \psi_\ell(x))$ for some $\ell < m$ and some $t < 2$ (notice here that $b \models \Theta(y)$ for all $b \in B$, so that the formulas in $\Theta(y) \subseteq r_s(y)$ are always realized in $B$).  Let $\gamma(x)$ be defined as follows:

 \begin{equation}
  \gamma(x) = \bigwedge_{\ell < m} \psi_\ell(x) \wedge \bigwedge_{i < K, u < 2} \varphi(x; c_{i,u})^u .
 \end{equation}

Since $s$ is closed under conjunction, $s$ extends $p_C$, and $\psi_\ell(x) \in s(x)$, we get that $\gamma(x) \in s(x)$.  To prove that $\gamma(x) \vdash p_C(x)$, notice that, for all $b \in B$, there exists $t < 2$ such that $N \models \forall x (\bigwedge_{\ell < m} \psi_\ell(x) \rightarrow \varphi(x;b)^t)$.  Therefore, $s(x) \vdash \gamma(x) \vdash \varphi(x; b)^t$, hence $\varphi(x; b)^t \in s(x)$.  But $s$ extends $p_C$, so we get that $\varphi(x; b)^t \in p_C(x)$.  Similarly, $\gamma(x) \vdash \varphi(x; c_{i, u})^u$ for all $i < K$ and $u < 2$.  Therefore, $\gamma(x) \vdash p_C(x)$.
\end{proof}

Now that we have a formula definition for $p_C(x)$ for each $s \in S(B \cup C)$, we will see that a single formula is equivalent to $p_C(x)$ using compactness.  After that, we will show that this means a finite $\varphi$-subtype of $p_C(x)$ is equivalent to the whole of $p_C(x)$.

\begin{lem}\label{determinetype}
 If $C = \{ c_{i,t} : i < K, t < 2 \}$ is a maximal good configuration of $p$, then there exists a formula $\psi(x)$ over $B \cup C$ such that $\psi(x)$ is equivalent to $p_C(x)$.
\end{lem}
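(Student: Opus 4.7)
The plan is to combine Lemma \ref{defnblty} with a standard Stone-space compactness argument. Lemma \ref{defnblty} supplies, for each complete type $s \in S(B \cup C)$ extending $p_C(x)$, a formula $\gamma_s(x) \in s(x)$ with $\gamma_s(x) \vdash p_C(x)$. In the language of the Stone space, the basic open neighborhood of $s$ cut out by $\gamma_s$ is entirely contained in the closed set of completions of $p_C$, and these neighborhoods cover that closed set; compactness then yields a finite subcover whose disjunction will serve as the desired formula $\psi(x)$.

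To spell this out without explicitly invoking Stone duality, I would form the partial type
\begin{equation}
 p_C(x) \cup \{ \neg \gamma_s(x) : s \in S(B \cup C) \text{ is a completion of } p_C \}
\end{equation}
and argue that it is inconsistent: any realization would extend to some complete type $s^\ast \in S(B \cup C)$ containing $p_C$, but then $\gamma_{s^\ast}(x)$ would lie in $s^\ast(x)$ while $\neg \gamma_{s^\ast}(x)$ is among the formulas imposed, a contradiction. Compactness then produces finitely many completions $s_0, \ldots, s_{m-1}$ such that $p_C(x) \cup \{ \neg \gamma_{s_\ell}(x) : \ell < m \}$ is already inconsistent, which rearranges to $p_C(x) \vdash \bigvee_{\ell < m} \gamma_{s_\ell}(x)$.

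Setting $\psi(x) = \bigvee_{\ell < m} \gamma_{s_\ell}(x)$, the forward implication $p_C \vdash \psi$ is exactly what compactness just gave, while the reverse $\psi \vdash p_C$ follows because each individual disjunct $\gamma_{s_\ell}$ already implies $p_C$ by Lemma \ref{defnblty}. Hence $\psi(x)$ is equivalent to $p_C(x)$. There is no genuine obstacle here beyond the bookkeeping: the real work was done in producing the pointwise definitions $\gamma_s$ via the non-finite-satisfiability of $r_s$ in $B$, and this lemma simply packages that family into a single formula. I note that $\psi$ will in general be a boolean combination of $\varphi$-formulas rather than a $\varphi$-formula itself, which is why the next step of the argument (needed to obtain $\varphi$-isolation) must still refine $\psi$ into an equivalent $\varphi$-formula over $B \cup C$.
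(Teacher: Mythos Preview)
Your proposal is correct and follows essentially the same route as the paper: invoke Lemma \ref{defnblty} to obtain $\gamma_s \in s$ with $\gamma_s \vdash p_C$ for each completion $s$ of $p_C$, observe that $p_C(x) \cup \{ \neg \gamma_s(x) : s \supseteq p_C \}$ is inconsistent, extract a finite subfamily by compactness, and take $\psi$ to be the disjunction of the corresponding $\gamma_s$'s. Your closing remark is also apt, though the paper's next lemma does not literally refine $\psi$ into a $\varphi$-formula but instead uses the equivalence $\psi \leftrightarrow p_C$ together with compactness to extract a finite $\varphi$-subtype $p_0 \subseteq p_C$ with $p_0 \vdash \psi \vdash p_C$.
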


\begin{proof}
 For each such $s(x) \in S(B \cup C)$ extending $p_C(x)$, define $\gamma_s(x)$ to be the formula such that $\gamma_s(x) \in s(x)$ and $\gamma_s(x) \vdash p_C(x)$ as given in Lemma \ref{defnblty}.

 Consider the following partial type over $B \cup C$:

 \begin{equation}
  \Sigma(x) = \{ \neg \gamma_s(x) : s \in S(B \cup C) \mathrm{\ and\ } s(x) \supseteq p_C(x) \} \cup p_C(x)
 \end{equation}

 Now $\Sigma(x)$ is inconsistent, since otherwise we would have $a \models p_C(x)$ yet $a \not\models \gamma_s(x)$ for any $s(x)$ extending $p_C(x)$.  In particular, $a \not\models \gamma_{s_0}(x)$ for $s_0 = \tp(a/ B \cup C)$.  This contradicts the fact that $s_0(x) \vdash \gamma_{s_0}(x)$.  Therefore, by compactness, there exists some finite set $S_0 \subseteq S(B \cup C)$ of types extending $p_C$ so that $\Sigma_0(x) = \{ \neg \gamma_s(x) : s \in S_0 \} \cup p_C(x)$ is inconsistent.  Let $\psi(x) = \bigvee_{s \in S_0} \gamma_s(x)$.

 Certainly $\psi(x) \vdash p_C(x)$ as $\gamma_s(x) \vdash p_C(x)$ for all $s \in S_0$.  Conversely, if $a \models p_C(x)$, then $a \not\models \{ \neg \gamma_s(x) : s \in S_0 \}$ (by the inconsistency of $\Sigma_0(x)$).  Therefore, $a \models \psi(x)$.  Hence, $p_C(x) \vdash \psi(x)$, as desired.
\end{proof}

\begin{lem}\label{finitecoll}
 If $C = \{ c_{i,t} : i < K, t < 2 \}$ is a maximal good configuration of $p$, then there exists a finite $\varphi$-subtype $p_0(x) \subseteq p_C(x)$ so that $p_0(x) \vdash p_C(x)$.
\end{lem}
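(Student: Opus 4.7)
The plan is to use Lemma \ref{determinetype} together with compactness; essentially all the work has already been done, and this lemma is the bridge from ``equivalent to a single formula'' to ``equivalent to a finite $\varphi$-subtype.''

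First I would invoke Lemma \ref{determinetype} to obtain a formula $\psi(x)$ over $B \cup C$ which is equivalent to $p_C(x)$. In particular, $p_C(x) \vdash \psi(x)$. By the compactness theorem, there exists a finite subset $p_0(x) \subseteq p_C(x)$ such that $p_0(x) \vdash \psi(x)$. Since $\psi(x)$ is equivalent to $p_C(x)$, we have $\psi(x) \vdash p_C(x)$, and hence $p_0(x) \vdash p_C(x)$.

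It then remains only to observe that $p_0$ is in fact a finite \emph{$\varphi$-subtype}. This is immediate from the construction: $p_C(x) = p(x) \cup \{\varphi(x; c_{j,t})^t : j < K, t < 2\}$, and since $p$ is a $\varphi$-type, every formula appearing in $p_C$ is of the form $\varphi(x; b)^u$. Thus any finite subset of $p_C$ automatically qualifies as a finite $\varphi$-subtype.

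I do not anticipate any obstacle here; the substantive content was in Lemmas \ref{defnblty} and \ref{determinetype}, which converted the non-finite-satisfiability of $r_s$ into a single defining formula. The present lemma is just the routine compactness step that converts a formula-level implication into a finite-subtype-level implication, and it completes the chain leading to (i) $\Rightarrow$ (ii) of Theorem \ref{elmextiso}: by Lemma \ref{gconfigbound} the maximal good configuration $C$ has size $K \le \mathrm{ID}(\varphi)$, so $|C| \le 2 \cdot \mathrm{ID}(\varphi)$, and $p_C$ is a $\varphi$-isolated elementary $\varphi$-extension of $p$ with the desired domain bound.
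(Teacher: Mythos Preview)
Your proposal is correct and matches the paper's own proof essentially verbatim: invoke Lemma~\ref{determinetype} to get $\psi$, then apply compactness to $p_C(x)\vdash\psi(x)$ (equivalently, to the inconsistency of $\{\neg\psi(x)\}\cup p_C(x)$) to extract a finite $p_0\subseteq p_C$ with $p_0\vdash\psi\vdash p_C$. Your added remark that any finite subset of $p_C$ is automatically a $\varphi$-subtype is a harmless clarification the paper leaves implicit.
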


\begin{proof}
 First let $\psi(x)$ be a formula over $B \cup C$ that is equivalent to $p_C(x)$, given by Lemma \ref{determinetype}.  Then consider $\{ \neg \psi(x) \} \cup p_C(x)$, a partial type over $B \cup C$.  This is clearly inconsistent.  Therefore, there exists a finite subset $p_0(x) \subseteq p_C(x)$ such that $\{ \neg \psi(x) \} \cup p_0(x)$ is inconsistent.  That is, $p_0(x) \vdash \psi(x)$ and, therefore, we get that $p_0(x) \vdash p_C(x)$.
\end{proof}

We are now ready to prove Proposition \ref{AlmostDef}.

\begin{proof}[Proof of Proposition \ref{AlmostDef}]
 Take $C = \{ c_{i,t} : i < K, t < 2 \}$ any maximal good configuration of $p$.  By definition, $C \subseteq \Theta(N)$.  By Lemma \ref{gconfigbound}, $K \le n$, hence $|C| \le 2 \cdot n$.  Let $p'(x) = p_C(x) = p(x) \cup \{ \varphi(x; c_{i,t})^t : i < K, t < 2 \}$.  By Lemma \ref{finitecoll}, there exists a finite $p_0(x) \subseteq p'(x)$ so that $p_0(x) \vdash p'(x)$.  Therefore, $p'(x)$ is $\varphi$-isolated.
\end{proof}

From here we can conclude that (i) implies (ii) holds for Theorem \ref{elmextiso}.

Let $\varphi(x;y)$ be dependent, fix $M \models T$, $B \subseteq M$ of elements of the same sort as $y$, and any $\varphi$-type $p \in S_\varphi(B)$.  Let $\Theta(y) = \{ P_B(y) \}$ in the language $L_B$ (notice that $\varphi(x;y)$ is still dependent with the same independence dimension in the theory $Th_{L_B}(M; B)$).  Therefore, by Proposition \ref{AlmostDef}, there exists $(N; B') \succeq (M; B)$ and $C \subseteq \Theta((N; B')) = B'$ with $|C| \le 2 \cdot \ID(\varphi)$ and a type $p'(x) \in S_\varphi(B \cup C)$ extending $p(x)$ such that $p'(x)$ is $\varphi$-isolated.  Notice then that $p'$ is an elementary $\varphi$-extension of $p$ that is $\varphi$-isolated, so condition (ii) holds.  Moreover, we get that $|\dom(p') - B| = |C| \le 2 \cdot \ID(\varphi)$, as desired. \hfill $\square$

\begin{rem}
 Finally, we remark that this $C$, hence $p'$, depends only on a type over $B$ with enough information to guarantee that $C$ is a good configuration of $p$ of maximal size.  For example, if we take $\overline{c} = (c_{i,t} : i < K, t < 2)$ for $C = \{ c_{i,t} : i < K, t < 2 \}$ a good configuration of $p$ of maximal size, and let

 \begin{itemize}
  \item [(i)] $q'(\overline{y}) = \{ P_B(y_{i,t}) : i < K, t < 2 \}$, 
  \item [(ii)] $q''(\overline{y}) = \left\{ \exists x \left( \psi(x) \wedge \bigwedge\limits_{i < K, t < 2} \varphi(x; c_{i,t})^t \right) : \psi(x) \mathrm{\ a\ finite\ conjunction\ from\ } p(x) \right\}$,
  \item [(iii)] $q'''(\overline{y}) = \tp_\Delta(\overline{c} / B)$, and
  \item [(iv)] $q(\overline{y}) = q'(\overline{y}) \cup q''(\overline{y}) \cup q'''(\overline{y})$,
 \end{itemize}

 then, for any $\overline{c}' \models q(\overline{y})$, the type $p_{\overline{c}'}(x) = p(x) \cup \{ \varphi(x; c'_{i, t})^t : i < K, t < 2 \}$ is $\varphi$-isolated (and an elementary $\varphi$-extension of $p$).  Notice here that $q(\overline{y}) \subseteq \tp(\overline{c} / B)$, the complete type of $\overline{c}$ over $B$.  Therefore, so long as we choose $(N; B') \succeq (M; B)$ so that $(N; B')$ is $|B|^+$-saturated, $q(\overline{y})$ is realized in $N$.  This allows us to pick $(N; B') \succeq (M; B)$ uniformly so that all $\varphi$-types over $B$ have extensions to $\varphi$-isolated $\varphi$-types with domain contained in $B'$.
\end{rem}

\section{$\varphi$-Isolated Elementary $\varphi$-Extensions for Stable $\varphi$}\label{Section_stablesetting}

Since stable formulas are, in particular, dependent, all stable formulas have the property of Theorem \ref{elmextiso} (ii).  But what is the $\varphi$-isolated elementary $\varphi$-extension $p'(x)$ of a given $\varphi$-type $p(x)$?  In the interesting case when $p(x)$ is not already $\varphi$-isolated, $p'(x)$ is a \textit{forking} extension of $p(x)$.  This follows from the Open Mapping Theorem (i.e. the fact that the restriction map from non-forking $\varphi$-extensions of $S_\varphi(A)$ to $S_\varphi(A)$ is open) as, if $p$ has a non-forking $\varphi$-isolated extension, then it is already $\varphi$-isolated.

On the issue of uniformity, the results of Theorem \ref{elmextiso} differ strongly from the standard definability of $\varphi$-types in the stable case.  In the case where $\varphi$ is stable, we can use a compactness argument to get a uniform definition of $\varphi$-types.  Note, however, that this uniform definition is not necessarily a $\varphi$-definition.  One cannot, in general, get a uniform $\varphi$-definition of all $\varphi$-types, even in the case where $\varphi$ is stable.

As an example, let $T$ be the theory, in the language $L = \{ E \}$ with a single binary relation $E$, stating that $E$ is an equivalence relation with infinitely many $E$-equivalence classes all of infinite size.  This theory is certainly stable, and even $\aleph_0$-stable.  Fix $M \models T$ and let $B \subset M$ be a set containing one element from one class, two from another, three from a third class, and so on.  Finally, let $\varphi(x; y,z,w)$ be the formula given by:

\begin{equation}
 \varphi(x; y, z, w) = [ (z = w \rightarrow x = y) \wedge (z \neq w \rightarrow E(x,y)) ]
\end{equation}

(so $\varphi$ encodes the two formulas ``$x=y$'' and $E(x,y)$ into a single formula).  Now let $n \in \omega$ be arbitrary and let $a \in M - B$ be in the $E$-equivalence class with exactly $n$ elements of $B$ in it; call this class $[a]_E$.  Finally, let $p_n(x) = \tp_\varphi(a / B)$.  Now, for any $(N; B') \succeq (M; B)$, notice that the $E$-equivalence class with exactly $n$ elements from $B$ still has exactly $n$ elements from $B'$, so $[a]_E \cap B' = [a]_E \cap B$.  However, this shows that any $\varphi$-extension of $p_n$ to some $p'$ with $\dom(p') \subseteq B'$ is $\varphi$-isolated only by a finite subtype whose domain contains $[a]_E \cap B$ (this is because we need the full set $[a]_E \cap B$ to say that $x \neq b$ for each $b \in ([a]_E \cap B)$ yet $E(x,b)$ for some (all) $b \in ([a]_E \cap B)$).  As $|[a]_E \cap B| = n$ and $n < \omega$ was arbitrary, we see from this example that there is no \textit{uniform} bound on the size of the $\varphi$-isolating $\varphi$-subtype of the elementary $\varphi$-extension given by Theorem \ref{elmextiso}, even in the stable case.

\end{document}